\renewcommand{\thefootnote}{} 
\theoremstyle{plain} 
\newtheorem{theorem}{\indent\sc Theorem}[section]
\newtheorem{lemma}[theorem]{\indent\sc Lemma}
\newtheorem{proposition}[theorem]{\indent\sc Proposition}
\theoremstyle{definition} 
\newtheorem{definition}[theorem]{\indent\sc Definition}
\newtheorem{remark}[theorem]{\indent\sc Remark}
\newtheorem{example}[theorem]{\indent\sc Example}
\newtheorem{question}[theorem]{\indent\sc Question}
\newtheorem{problem}[theorem]{\indent\sc Problem}
\newenvironment{sproof}{%
  \proof}{\endproof}
\newcommand{\cF}{\mathcal{F}}
\newcommand{\cL}{\mathcal{L}}
\newcommand{\cT}{\mathcal{T}}
\newcommand{\cB}{\mathcal{B}}
\newcommand{\stab}[1]{\operatorname{Stab}(#1)}
\newcommand{\oF}{\overrightarrow{F}}
\newcommand{\Ima}{\operatorname{Im}}
\newcommand\cinput[2]{\lower#1pt\hbox{\input{#2}}}
\subjclass[2020]{Primary 57K10; Secondary 20F65}
\begin{document}
\keywords{Alexander's theorem, stabilizer subgroup, Thompson's group}

\title{Alexander's theorem for stabilizer subgroups of Thompson's group}
\author{Yuya Kodama and Akihiro Takano}
\date{}
\renewcommand{\thefootnote}{\arabic{footnote}}  
\setcounter{footnote}{0} 
\thanks{The first author was supported by JST, the establishment of university
fellowships towards the creation of science technology innovation, Grant Number JPMJFS2139.}

\begin{abstract}
In 2017, Jones studied the unitary representations of Thompson's group $F$ and defined a method to construct knots and links from $F$.
One of his results is that any knot or link can be obtained from an element of this group, which is called Alexander's theorem.
On the other hand, Thompson's group $F$ has many subgroups and it is known that there exist various subgroups which satisfy or do not satisfy Alexander's theorem.
In this paper, we prove that almost all stabilizer subgroups under the natural action on the unit interval satisfy Alexander's theorem.
\end{abstract}

\maketitle

\section{Introduction}
Thompson's groups $F$, $T$, and $V$ were defined by Richard Thompson in 1965. 
These groups were first studied from the motivation of logic. 
Later they have been found to play a central role in various areas, including geometric group theory, and are being actively studied. 

Recently, Jones \cite{jones2017thompson} studied the representations of Thompson's group $F$ and $T$ motivated by the algebraic quantum field theory on the circle.
In this research, he introduced a method to construct knots and links from elements of $F$, that is, he defined the map
$$
\cL \colon F \to \{ \textrm{all knots and links}\}.
$$
Jones showed that this map is surjective.
This is called Alexander's theorem, which originally states a relation with knot theory and the braid group.
Moreover, Jones defined two subgroups of $F$ as the stabilizer subgroups of its unitary representations.
One is called the oriented subgroup $\oF$, and Aiello \cite{aiello2020alexander} proved that the map $\cL$ restricted to $\oF$ is also surjective.
In contrast, the other is called the 3-colorable subgroup $\cF$ \cite{jones2018nogo}, and the authors \cite{kodama20223} proved that all knots and links obtained from non-trivial elements of $\cF$ are $3$-colorable.
Namely, the map $\cL$ restricted to $\cF$ is not surjective.
To the best of the authors' knowledge, this subgroup is the first example of the one which does not satisfy Alexander's theorem but produces non-trivial knots or links.
Therefore we can consider the following problem:

\begin{problem}
Let $G$ be a subgroup of $F$.
Determine whether the restricted map
$$
\cL |_{G} \colon G \to \{ \textrm{all knots and links}\}
$$
is surjective or not.
More specifically, determine the image of this map.
\end{problem}

In this paper, we focus on the stabilizer subgroups under the natural action on $[0, 1]$ and show the following result:
\begin{theorem}\label{Alexander_stabilizer}
Let $r \in (0, 1)$ with $r \neq 1/2$. 
Then for any knot or link $L$, there exists $g \in \stab{r}$ such that $\cL (g)=L$. 
\end{theorem}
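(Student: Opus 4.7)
The plan is to start from Jones's Alexander theorem, which supplies some $h \in F$ with $\cL(h) = L$ for any target knot or link $L$, and to convert $h$ into an element of $\stab{r}$ without changing the associated link. In other words, I aim to realize $\stab{r}$ as containing enough of $F$, up to the link construction, so that Jones's theorem transfers.

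The core step is a \emph{localization} of $h$. I would choose dyadic numbers $a, b$ with $0 \le a < r < b \le 1$ and define $g \in F$ to act as the identity on $[a, b]$ (automatically fixing $r$, so $g \in \stab{r}$) and as a rescaled copy of $h$ on one of the complementary intervals $[0, a]$ or $[b, 1]$. The hypothesis $r \neq 1/2$ enters precisely here: it guarantees that one side of $r$ is dyadically ``bigger'' in an asymmetric sense, which lets me choose the complementary interval freely and with the specific dyadic depth needed to receive a copy of $h$; for $r = 1/2$ every natural choice is forced into a symmetric position.

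The key remaining task is to verify $\cL(g) = \cL(h)$. In the tree-pair description of $F$, passing from $h$ to $g$ corresponds to grafting the tree pair $(T_-, T_+)$ of $h$ into a fixed position inside a larger tree pair whose other subtrees agree on both sides. I would then analyze Jones's diagrammatic link construction and argue that such symmetric grafting is link-invariant---or at worst produces a disjoint union with some unknot components, which can then be absorbed by multiplying $g$ by an auxiliary element of $\stab{r}$ supported on the remaining complementary dyadic interval. The latter maneuver is available exactly because $r \neq 1/2$ gives us two independent complementary intervals to play with.

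The main obstacle is the link-invariance of this grafting operation. Jones's construction depends sensitively on the full combinatorics of the tree pair, and a priori symmetric caret additions could alter the crossing pattern or the number of components. I expect to handle this by decomposing the grafting into elementary single-caret additions and checking each is link-neutral (modulo disjoint unknots), likely by an induction on the number of carets added together with a direct diagrammatic argument identifying the effect with Reidemeister-type moves. Once this invariance is in place, the theorem follows immediately from Jones's surjectivity of $\cL$ on $F$.
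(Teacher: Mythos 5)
There is a genuine gap, and it sits exactly at the step you flag as the ``main obstacle.'' Your plan is to pad a copy of $h$ with the identity: $g$ acts as a rescaled $h$ on one complementary dyadic interval and as the identity on the rest, including a neighbourhood of $r$. For the grafting to be a single-leaf attachment (so that the tree pair of $g$ is $h$'s tree pair hung on a larger tree and Lemma \ref{lemma_connected_sum} applies), the receiving interval must be a standard dyadic interval $[0,2^{-q}]$ or $[1-2^{-q},1]$. But then $g$ is the identity on an interval containing $1/2$, so $g\in\stab{1/2}$, and by Remark \ref{1/2_case} \emph{every} such element has a reduced tree pair with a common root caret and therefore produces a link with a split unknot component: $\cL(g)=L\sqcup\lower-1pt\hbox{$\bigcirc$}$, never a non-split $L$ such as the trefoil. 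Concretely, the ``larger tree pair whose other subtrees agree on both sides'' is a non-reduced representative of the identity, and each extra caret in it contributes a disjoint unknot; its link is an unlink with at least two components, not the unknot, so the connected-sum lemma gives $L$ plus extra components. Your proposed repair --- absorbing these components by multiplying by an auxiliary element supported on the other complementary interval --- cannot work: that auxiliary element also fixes a neighbourhood of $1/2$, so the product is still in $\stab{1/2}$ and still carries the split unknot; moreover $\cL$ is not multiplicative, so group multiplication gives you no diagrammatic control at all. Choosing a non-standard dyadic receiving interval avoids $\stab{1/2}$ but destroys the single-leaf grafting, and with it your entire link computation. Your reading of where $r\neq 1/2$ enters (``one side is dyadically bigger'') also misses the real point: the issue is that for $r=1/2$ every stabilizing tree pair has the split form, not an asymmetry of depths.

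The paper's proof supplies precisely the ingredient your construction lacks: for each $r\neq 1/2$ it builds an explicit \emph{nontrivial} element $f\in\stab{r}$ (a ``basic element'' $g_u$, or the image of one under the link-preserving operations $A_{00},A_{01},A_{10},A_{11}$, chosen via the binary expansion of $r$) that acts as the identity on a standard dyadic interval containing $r$ while \emph{not} having the split tree-pair structure, and whose link is the honest one-component unknot. Attaching $h$ at a leaf of $f$ away from $r$ then gives $\cL(f\circ_i h)=\cL(f)\#\cL(h)=L$ by Lemma \ref{lemma_connected_sum}. In short: you must connect-sum $h$ onto a nontrivial unknot-valued stabilizing element, not onto a padded identity; constructing that element is the actual content of the theorem, and it is absent from your proposal.
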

Note that we have $\stab{0}=\stab{1}=F$. 
If $r=1/2$, then the map $\cL_{\stab{1/2}}$ is not surjective. 
See Remark \ref{1/2_case} for details. 

This paper is organized as follows:
in Section \ref{section_preliminary}, we first summarize the definition of Thompson's group $F$.
We then explain the method of constructing knots and links from elements of $F$, and introduce a binary operation of $F$ which is called the attaching operation.
This operation corresponds to the connected sum of links, and is therefore useful for studying Jones's map $\cL$. 
In Section \ref{proof_Alexander}, we first prepare some elements of $F$ and operations, and then we show the main theorem.
In Section \ref{other_subgroup}, we give other examples of subgroups which satisfy or do not satisfy Alexander's theorem.
\section{Preliminaries} \label{section_preliminary}
\subsection{Thompson's group $F$}
In this section, we summarize the definition of Thompson's group $F$ and its properties. 
See \cite{cannon1996intro} for details. 
\begin{definition}
\textbf{Thompson's group $F$} is the group consisting of homeomorphisms on the closed unit interval $[0, 1]$ which satisfy the following: 
\begin{enumerate}
\item they are piecewise linear maps and preserve the orientation, 
\item in each linear part, its slope is a power of $2$, and
\item each breakpoint is in $\mathbb{Z}[1/2] \times \mathbb{Z}[1/2]$. 
\end{enumerate}
\end{definition}
Hence $F$ has a natural right action on $[0, 1]$, and we will consider the stabilizer subgroups for this action. 

It is known that there exist several (equivalent) definitions for $F$. 
We also use binary trees to represent elements of $F$. 
Let $\cT$ be the set of all pairs of (rooted) binary trees whose numbers of leaves are the same. 
We define an equivalence relation on $\cT$ as follows: 
let $(T_+, T_-)$ be in $\cT$ with $n$ leaves. 
We label the leaves of $T_+$ and $T_-$ with $0, \dots, n-1$ from left to right. 
We assume that there exists $i \in \{0, \dots n-2\}$ such that two leaves labeled $i$ and $i+1$ have a common parent in both $T_+$ and $T_-$. 
Then we can obtain a new element $(T_+^\prime, T_-^\prime) \in \cT$ by removing the minimal binary tree containing leaves labeled $i$, $i+1$ and their parent from both $T_+$ and $T_-$. 
A binary tree such as this minimal tree, consisting of three vertices and two edges, is termed a \textbf{caret}.
We call this operation and its inverse one \textbf{reduction} and \textbf{insertion}, respectively. 
We define an equivalence relation $\sim$ on $\cT$ as the one generated by these operations. 
We call each element of $\cT$ a \textbf{tree pair}, and define a tree pair $(T_+, T_-)$ is \textbf{reduced} if there exists no leave in $T_+$ and $T_-$ such that we can not reduce any caret. 
It is known that for any $g \in F$, there exists a unique tree pair $(T_+, T_-)$ such that $(T_+, T_-)$ is reduced and is a representative of $g$ \cite[\S 2]{cannon1996intro}. 

We define an operation $\times$ on $\cT/{\sim}$ as follows: 
Let $A, B$ in $\cT/{\sim}$, and $(A_+, A_-)$ and $(B_+, B_-)$ be its representative, respectively. 
Then there exists a binary tree $C$ such that $(A_+, A_-) \sim (A_+^\prime, C)$ and $(B_+, B_-) \sim (C, B_-^\prime)$ hold. 
We define $A \times B$ to be the equivalence class of $(A_+^\prime, B_-^\prime)$. 
This operation does not depend on the choice of $C$. 
The identity element is the equivalence class of $(T, T)$ where $T$ is a binary tree, and for the equivalence class of $(T_+, T_-)$, its inverse element is the equivalence class of $(T_-, T_+)$. 
Hence $(\cT/{\sim}, \times)$ is a group. 

As we describe below, the group $(\cT/{\sim}, \times)$ and $F$ are isomorphic. 
This is a well-known fact, but since we use this identification repeatedly, we write a sketch of proof for the reader's convenience. 
\begin{proposition}[{\cite[\S 2]{cannon1996intro}}] \label{proposition_isomorphism_F}
The group $(\cT/{\sim}, \times)$ is isomorphic to Thompson's group $F$. 
\begin{sproof}
Let $A$ be a binary tree with $n$ leaves. 
We decompose the closed interval $[0, 1]$ by $A$ as follows: 
first, we assign $[0, 1]$ to the root of $A$. 
Next, if a parent has the interval $[a, b]$, then we assign $[a, (a+b)/2]$ and $[(a+b)/2, b]$ to the left and right children, respectively. 
By iterating this process, we get subintervals $[a_0, a_1], [a_1, a_2], \dots, [a_{n-1}, a_n]$ where $a_0=0$ and $a_n=1$ from leaves of $A$.

Let $X$ be in $\cT/{\sim}$, and $(A, B)$ be its representative. 
Let $[a_0, a_1], [a_1, a_2], \dots, [a_{n-1}, a_n]$ and $[b_0, b_1], [b_1, b_2], \dots, [b_{n-1}, b_n]$ be the subintervals obtained from $A$ and $B$, respectively. 
By mapping each subinterval $[a_i, a_{i+1}]$ to $[b_i, b_{i+1}]$ linearly, we get a map $f$ in $F$ from $\cT/{\sim}$. 
This correspondence implies that $(\cT/{\sim}, \times)$ is isomorphic to $F$. 
\end{sproof}
\end{proposition}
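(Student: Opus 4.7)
The plan is to construct an explicit isomorphism $\Phi \colon (\cT/{\sim}, \times) \to F$ using the dyadic-interval assignment described in the sketch, and then verify the four required properties: well-definedness on the quotient, compatibility with products, injectivity, and surjectivity. First I would observe that a binary tree $A$ with $n$ leaves, via the root-to-leaf bisection procedure, produces a subdivision $0=a_0<a_1<\dots<a_n=1$ of $[0,1]$ into \emph{standard dyadic intervals}, i.e.\ intervals of the form $[k/2^m,(k+1)/2^m]$. Given a representative $(A,B)$ of an equivalence class, the piecewise linear map $f_{(A,B)}$ sending $[a_i,a_{i+1}]$ affinely onto $[b_i,b_{i+1}]$ therefore has slopes that are quotients of powers of $2$, hence integer powers of $2$, and its breakpoints lie in $\mathbb{Z}[1/2]\times\mathbb{Z}[1/2]$; so $f_{(A,B)}\in F$.

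Next I would check that $\Phi$ descends to the quotient by $\sim$ and respects the product. For well-definedness it suffices to check invariance under a single insertion/reduction: inserting a caret at leaves $i,i{+}1$ of both $A$ and $B$ replaces $[a_i,a_{i+1}]$ and $[b_i,b_{i+1}]$ by their respective halves; the affine map sending $[a_i,a_{i+1}]$ onto $[b_i,b_{i+1}]$ automatically sends each half onto the corresponding half, so $f_{(A,B)}$ is unchanged. For the homomorphism property, given $X=[(A_+,A_-)]$ and $Y=[(B_+,B_-)]$, choose a common refinement $C$ so that $X$ is represented by $(A_+',C)$ and $Y$ by $(C,B_-')$; by construction $\Phi(X)$ sends the dyadic partition from $A_+'$ affinely onto the one from $C$, while $\Phi(Y)$ sends the one from $C$ affinely onto the one from $B_-'$, so their composition coincides with $\Phi(X\times Y)=f_{(A_+',B_-')}$ on each subinterval.

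Injectivity follows from the reduced-representative uniqueness: if $\Phi(X)=\mathrm{id}_{[0,1]}$ for $X$ represented by a reduced pair $(A,B)$, then $a_i=b_i$ for all $i$, which forces $A=B$, and after full reduction $X$ is the identity class. For surjectivity, given $f\in F$ with breakpoints $0=x_0<x_1<\dots<x_n=1$ mapping to $0=y_0<y_1<\dots<y_n=1$, I would refine both partitions into standard dyadic partitions on which $f$ remains piecewise linear; this is possible because the $x_i,y_i$ are dyadic and the slopes are powers of $2$, so choosing a sufficiently fine common dyadic scale yields standard dyadic subdivisions of both domain and codomain compatible with $f$. The main step is then the combinatorial lemma that every standard dyadic partition of $[0,1]$ is exactly the leaf-partition of some binary tree, which is proved by induction on the number of pieces, iteratively attaching carets at leaves whose assigned intervals need to be further bisected. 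Applying this lemma to the refined domain and codomain partitions yields binary trees $A,B$ with $(A,B)$ representing an element $X$ with $\Phi(X)=f$.

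The main technical obstacle is the surjectivity step, specifically the claim that any $f\in F$ admits a simultaneous refinement of its domain and codomain partitions by \emph{standard} dyadic intervals that are both realised by binary trees; the subtle point is that the dyadic denominators on the two sides need not match a priori, so one has to choose a common refinement fine enough on both sides, and then verify that the resulting partitions really are of the standard form required by the tree-to-interval decomposition rather than arbitrary dyadic subdivisions.
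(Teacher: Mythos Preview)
Your proposal is correct and follows exactly the approach of the paper's sketch, only filling in the details (well-definedness under $\sim$, the homomorphism property, injectivity, and surjectivity) that the paper leaves implicit under the phrase ``this correspondence implies that $(\cT/{\sim},\times)$ is isomorphic to $F$''. In particular, the paper defines the same map $\Phi$ via the dyadic leaf-interval assignment and the piecewise-linear identification of corresponding leaf intervals, so there is no methodological difference to discuss.
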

Let $T$ be a binary tree and $i$ be its leaf. 
Since $T$ is a tree, there exists a unique path from the root to $i$. 
For each caret, by setting $0$ when it bifurcates to the left and $1$ when it bifurcates to the right, we identify the binary word with the path (hence with the leaf $i$). 
For a leaf $i$, let $[a_i, a_{i+1}]$ be the corresponding subinterval defined in Proposition \ref{proposition_isomorphism_F} and $w_1 \cdots w_n$ be the binary word  corresponding to $i$. 
Note that we have $a_i=\Sigma_{i=1}^{n} w_i/2^i$. 

There exist two well-known presentations for $F$: 
\begin{align*}
F &\cong \langle x_0, x_1, \dots \mid x_i^{-1}x_jx_i=x_{j+1} (i<j) \rangle \\
&\cong \langle x_0, x_1 \mid [x_0x_1^{-1}, x_0^{-1}x_1x_0], [x_0x_1^{-1}, x_0^{-2}x_1x_0^2] \rangle, 
\end{align*}
where the generating set $\{x_0, x_1, \dots\}$ are illustrated in Figure \ref{generators_x0x1x2}, and $[a, b]$ denotes the commutator $aba^{-1}b^{-1}$. 
We use these generators in Section \ref{other_subgroup}. 
\begin{figure}[tbp]
\begin{center}
\includegraphics[width=\linewidth]{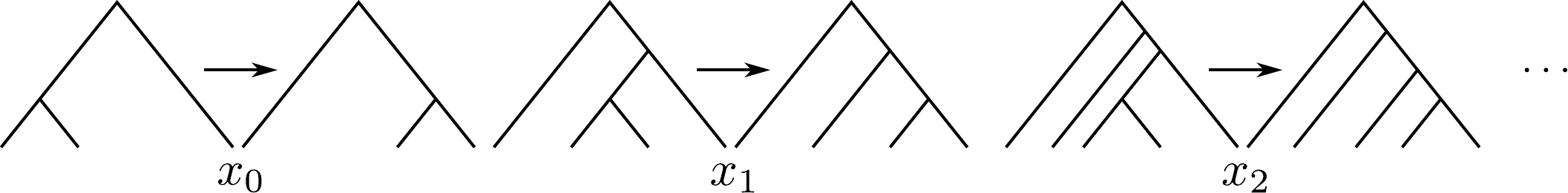}
\end{center}
\caption{The generators of $F$. }
\label{generators_x0x1x2}
\end{figure}
\subsection{Jones' construction} \label{jones}
In this section, we explain the method to construct knots and links from elements of $F$ based on \cite{jones2019thompson}.
Let $(T_+, T_-)$ be a tree pair.


For any region, including the unbounded one, of $(T_+, T_-)$, there exist exactly two carets, one in $T_+$ and the other in $T_-$.
Then we connect roots of such carets by an edge in the region, and obtain the plane graph $\cB(T_+, T_-)$ associated with $(T_+, T_-)$.
Figure~\ref{step1} is an example of the graph $\cB(T_+, T_-)$.

\begin{figure}[tbp]
\begin{center}
\includegraphics[height=120pt]{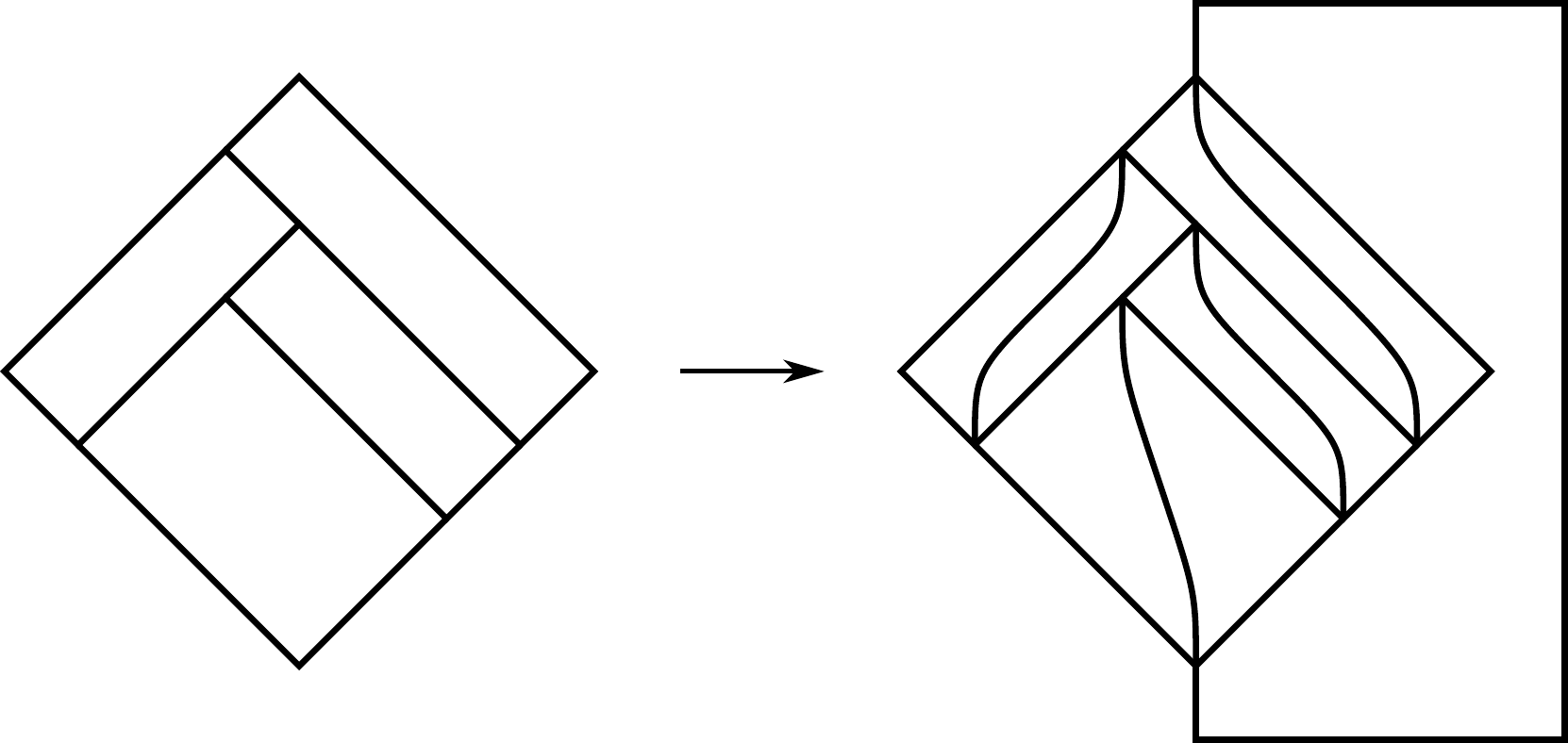}
\end{center}
\caption{The plane graph $\cB(T_+, T_-)$ associated with $(T_+, T_-)$.}
\label{step1}
\end{figure}


Then since each bifurcation has exactly four edges in $\cB(T_+, T_-)$, this graph can be regarded as a link projection.
Hence we obtain the link diagram $\cL(T_+, T_-)$ by turning each vertex into a crossing with the rules \cinput{5}{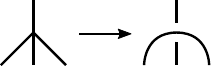_tex} or \cinput{5}{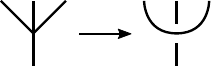_tex}.
Figure~\ref{step2} is an example of the link diagram $\cL(T_+, T_-)$.

\begin{figure}[tbp]
\begin{center}
\includegraphics[height=120pt]{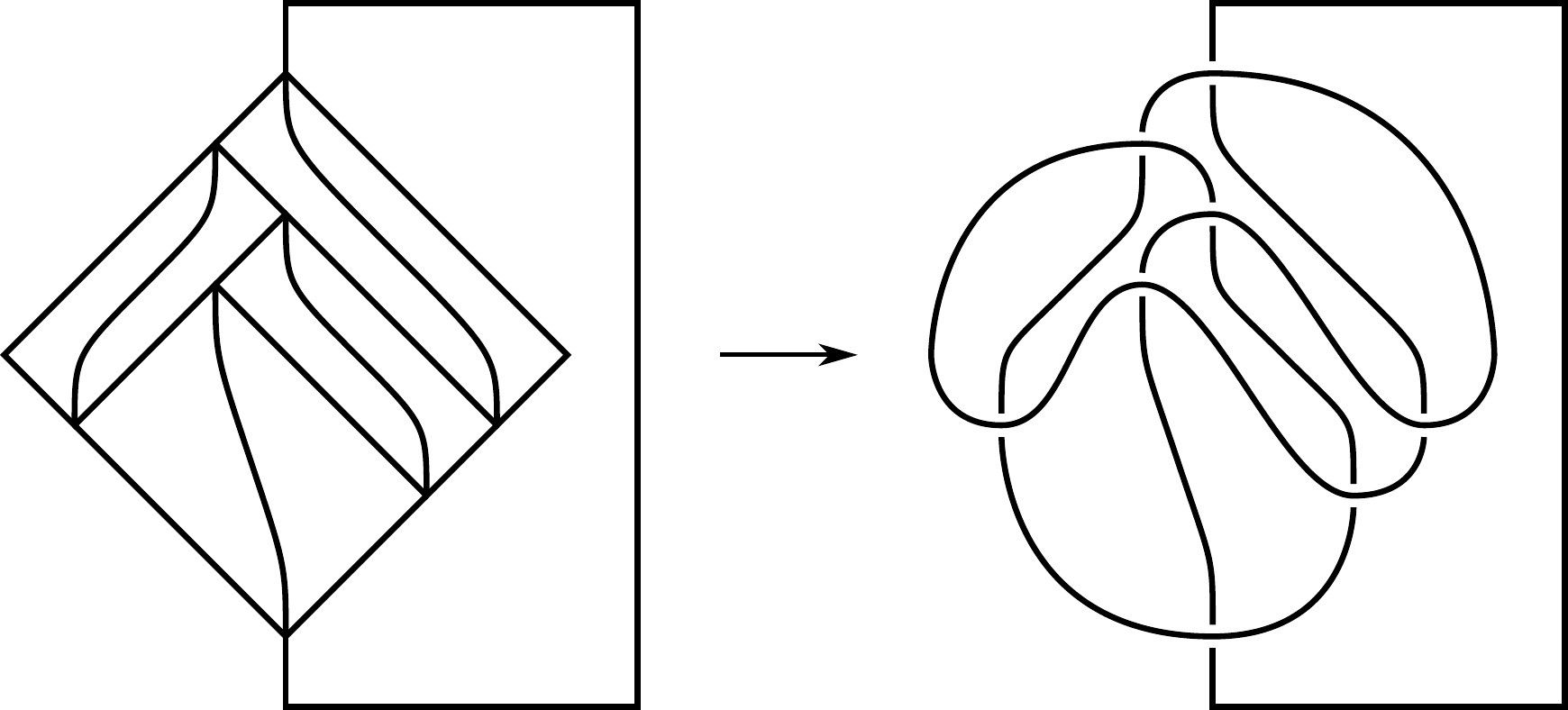}
\end{center}
\caption{The link diagram $\cL(T_+, T_-)$.
This gives the trefoil knot $3_1$.}
\label{step2}
\end{figure}

This construction is defined for any tree pair.
However, the link diagram of a non-reduced tree pair is different from that of the equivalent reduced tree pair:
let $(T'_+, T'_-)$ be a non-reduced tree pair obtained by inserting carets into a leaf of the reduced tree pair $(T_+, T_-)$.
Then applying Jones' construction to $(T'_+, T'_-)$, we have $\cL(T'_+, T'_-) = \cL(T_+, T_-) \sqcup \lower-1pt\hbox{$\bigcirc$}$; see Figure~\ref{caret_triv}.
On the other hand, each element $g \in F$ has a unique reduced tree pair.
Therefore we can consider a map
$$
\cL \colon F \to \{ \textrm{all knots and links}\}
$$
defined by $\cL(g)$ as the link diagram obtained from its reduced tree pair.
Jones \cite[Theorem 5.3.1]{jones2017thompson} proved that this map is surjective, which is called \textbf{Alexander's theorem}.
However, it is easy to see that this is not injective.

\begin{figure}[tbp]
\begin{center}
\includegraphics[height=50pt]{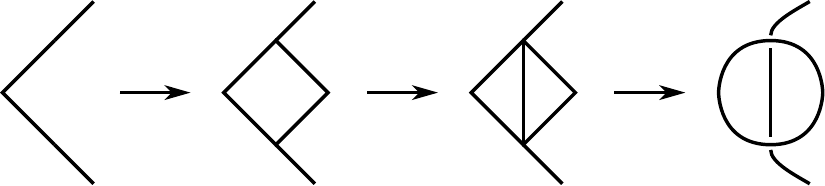}
\end{center}
\caption{A trivial link component by inserting carets.}
\label{caret_triv}
\end{figure}

At the end of this section, we describe a relation between Jones' construction and a geometric modification of links, called the connected sum.

\begin{definition} \label{attaching}
The \textbf{$i$-th attaching operation} $\circ_i \colon \cT \times \cT \to \cT$ is defined as follows:
let $(A_+, A_-), (B_+, B_-)$ be two tree pairs (not necessarily reduced).
Then the new tree pair $(A_+, A_-) \circ_i (B_+, B_-)$ is defined as the one obtained by attaching the root of $B_+$ (resp.~$B_-$) to the $i$-th leaf of $A_+$ (resp.~$A_-$); see below.
\begin{align*}
(A_+, A_-) \circ_i (B_+, B_-) \coloneqq \cinput{30}{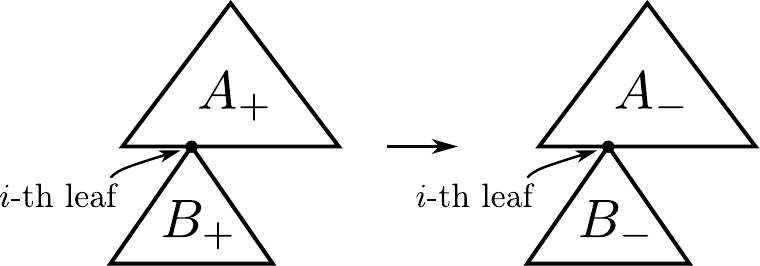_tex}
\end{align*}
\end{definition}
\begin{remark}
In \cite[Section 2]{golan2019divergence}, they defined a similar notion by using branches of tree pairs. 
We briefly review only the case of $F$. 
Let $(T_+, T_-)$ be a reduced tree pair of $g \in F$. 
Let $u$ be the path from the root to the $i$-th leaf of $T_+$. 
Similarly, let $v$ be the $i$-th path of $T_-$. 
Then the pair of paths $u\to v$ is called a \textbf{branch} of $(T_+, T_-)$. 
Let $T_v$ be a minimal binary tree such that $T_v$ contains $v$ as a (binary) path from the root to a leaf and $j$ be the number corresponding to this leaf. 
For reduced tree pair $(H_+, H_-)$ of $h \in F$, we define $h_{[v]} \coloneqq (T_v, T_v) \circ_j (H_+, H_-)$. 
Then the reduced tree diagram of $g h_{[v]}$ (this appears in \cite[Lemma 2.6]{golan2019divergence}) and $(T_+, T_-) \circ_i (H_+, H_-)$ are the same. 
\end{remark}

Similarly to Jones' map $\cL$, by choosing the reduced tree pair, the $i$-th attaching operation $\circ_i \colon F \times F \to F$ is well-defined.

By direct computations, we easily see the following lemma.

\begin{lemma}\label{lemma_connected_sum}
Let $g, h$ be two elements of $F$.
Then the link $\cL(g \circ_i h)$ is the connected sum of $\cL(g)$ and $\cL(h)$. 
In particular, if $\cL(g)$ $($resp.~$\cL(h)$$)$ is the unknot, then $\cL(g \circ_i h)$ coincides with $\cL(h)$ $($resp.~$\cL(g)$$)$.
\end{lemma}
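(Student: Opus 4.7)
The plan is to argue directly from Jones' construction that attaching $(B_+, B_-)$ at the $i$-th leaf of $(A_+, A_-)$ inserts $\cL(h)$ into $\cL(g)$ in the pattern of a connected sum. Two distinguished arcs will serve as the connected-sum arcs. The first is the arc $\alpha_A$ of $\cL(g)$ through the $i$-th leaf $\ell_i$: in $\cB(A_+, A_-)$ the leaf $\ell_i$ is a degree-$2$ vertex, and its two incident edges combine into a single arc of $\cL(g)$ joining the crossings at the parent carets $P^+$ in $A_+$ and $P^-$ in $A_-$. The second is the arc $\alpha_B$ of $\cL(h)$ lying in the unbounded region of $\cB(B_+, B_-)$: the two designated carets for that region are the tree roots $r_B^\pm$ of $B_\pm$, and the region edge between them joins the crossings at $r_B^+$ and $r_B^-$.

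The main step is a local comparison of $\cB(A_+ \circ_i B_+,\, A_- \circ_i B_-)$ with $\cB(A_+, A_-)$ and $\cB(B_+, B_-)$ at the attachment point. Under the attaching, $\ell_i$ is absorbed: on the $+$ side it is identified with $r_B^+$ to become an interior caret root $v$ of the combined $+$ tree, and on the $-$ side with $r_B^-$ to become $v'$. Every bifurcation persists as a crossing, every tree edge persists, and every region edge of $\cB(g)$ and of $\cB(h)$ persists \emph{except} for $\alpha_B$: in the combined plane picture the unbounded region of $\cB(h)$ is no longer a face, because attaching splits it into two pieces that merge with the two faces of $\cB(g)$ adjacent to $\ell_i$. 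An Euler-formula check confirms this---the total face count drops from $n + k$ to $n + k - 1$, where $A_\pm$ and $B_\pm$ have $n$ and $k$ leaves, so exactly one region edge disappears, and it must be $\alpha_B$. Consequently $\alpha_A$ is cut at $\ell_i$ into two halves ending at $v$ and $v'$, and these halves connect to the former endpoints of the deleted $\alpha_B$. This is exactly the splicing pattern defining the connected sum, so $\cL(g \circ_i h) = \cL(g) \,\#\, \cL(h)$. The ``in particular'' clause is then immediate, since the connected sum with an unknotted component leaves the other summand unchanged.

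The main obstacle is the combinatorial bookkeeping at the attaching point. A clean way to organize it is via the change of status of $r_B^\pm$: as tree roots of $B_\pm$ they are $4$-valent vertices of $\cB(h)$ with two children edges and two region edges (one in the unbounded face and one in an adjacent bounded face), but as interior caret roots of the combined trees they become $4$-valent with two children edges, one parent edge (the edge of $\cB(g)$ formerly incident to $\ell_i$), and only one region edge. The region edge that vanishes at both ends is precisely $\alpha_B$, and this is exactly what effects the connected-sum splicing.
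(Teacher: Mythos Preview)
Your argument is correct. The paper itself does not supply a proof of this lemma; it simply prefaces the statement with ``By direct computations, we easily see the following lemma.'' Your write-up therefore fills in what the authors leave implicit, and it does so along exactly the lines one would expect: identify the arc $\alpha_A$ through the $i$-th leaf of $(A_+,A_-)$ and the outer arc $\alpha_B$ joining the two tree roots of $(B_+,B_-)$, then check that the attachment replaces the pair $(\alpha_A,\alpha_B)$ by the connected-sum splice while leaving every other tree edge, region edge, and crossing type untouched.

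Two small points you may want to tighten. First, when you speak of ``the unbounded region of $\cB(h)$'' and ``the two faces of $\cB(g)$ adjacent to $\ell_i$'', what you actually need are the regions of the bare tree pairs $(B_+,B_-)$ and $(A_+,A_-)$, before the region edges are inserted; the faces of the $4$-valent graph $\cB$ form a strictly finer decomposition, and it is the coarser tree-pair regions that govern which region edges are drawn. Second, the Euler count (from $n+k$ leaves down to $n+k-1$) only shows that \emph{some} region edge is lost; the identification of the lost edge with $\alpha_B$ comes from the region analysis you give just before (the unbounded region of $(B_+,B_-)$ is split and absorbed into the two $(A_+,A_-)$-regions flanking $\ell_i$, whose own caret roots are unchanged). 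So the Euler remark is a consistency check rather than an independent step. Neither point affects the validity of the proof.
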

\section{Proof of Theorem \ref{Alexander_stabilizer}} \label{proof_Alexander}
At first, in order to prove Theorem \ref{Alexander_stabilizer}, we introduce \textbf{basic elements} in $F$ which stabilize some intervals.

\begin{definition} \label{basic_element}
Let $u \in \{ 000, 00100, 00101, 0011, 0100, 0101, 0110 \}$, then the basic element $g_u = (T^u_+, T^u_-) \in F$ is defined as follows:
\begin{align*}
g_{000} &\coloneqq \cinput{13}{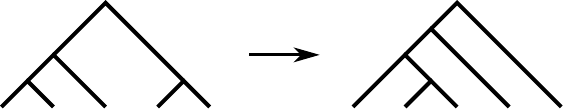_tex},\\
g_{00100} &\coloneqq \cinput{18}{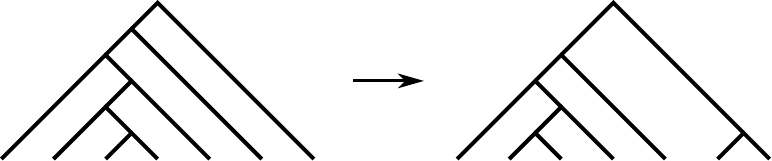_tex},\\
g_{00101} &\coloneqq \cinput{23}{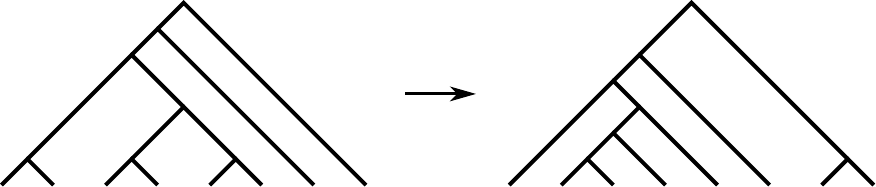_tex},\\
g_{0011} &\coloneqq \cinput{18}{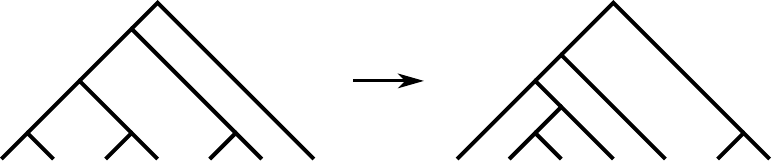_tex},\\
g_{0100} &\coloneqq \cinput{15}{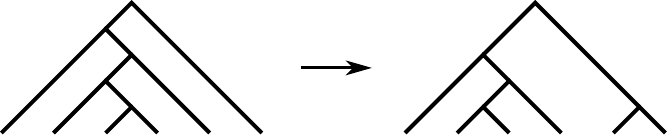_tex},\\
g_{0101} &\coloneqq \cinput{18}{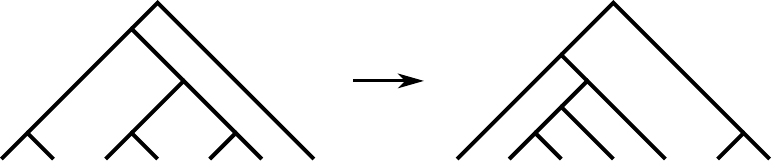_tex},\ \ \textrm{and}\\
g_{0110} &\coloneqq \cinput{18}{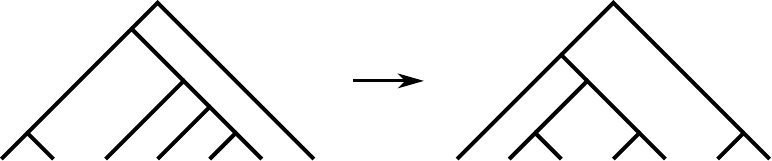_tex}.
\end{align*}
\end{definition}
For example, since the $0$-th paths of $T^{000}_+$ and $T^{000}_-$ are both $000$, the element $g_{000}$ stabilizes the interval $[0, 1/8]$.
Similarly, each element $g_u$ stabilizes the interval corresponding to the binary word $u$.
Moreover, all of the links $\cL (g_u)$ are the unknots; see Figure \ref{basic_link} for instance.

\begin{figure}[!tbp]
\centering
\subfloat[]{\includegraphics{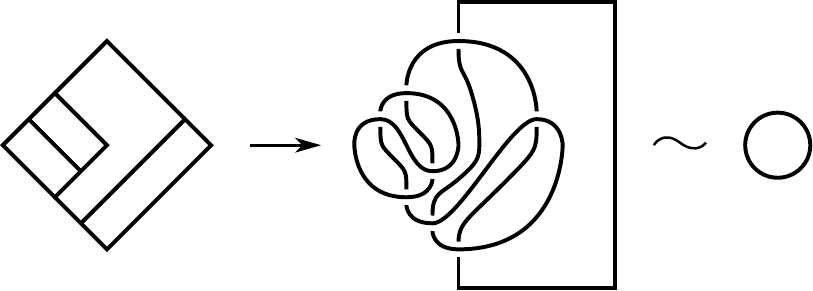}
\label{}}
\\
\subfloat[]{\includegraphics{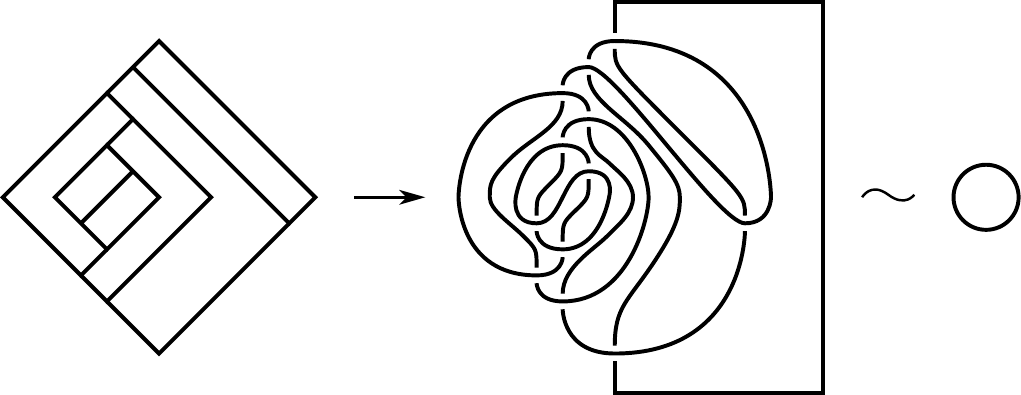}
\label{}}
\vspace{-20pt}
\caption{The links $\cL(g_{000})$ and $\cL(g_{00100})$ are the unknots.}
\label{basic_link}
\end{figure}

Next, we define the following operations of tree pairs.

\begin{definition}
Let $(T_+, T_-)$ be a tree pair (not necessarily reduced), then two operations $A_{00}, A_{01} \colon \cT \to \cT$ are defined as described below:
\begin{align*}
A_{00} (T_+, T_-) &\coloneqq \cinput{28}{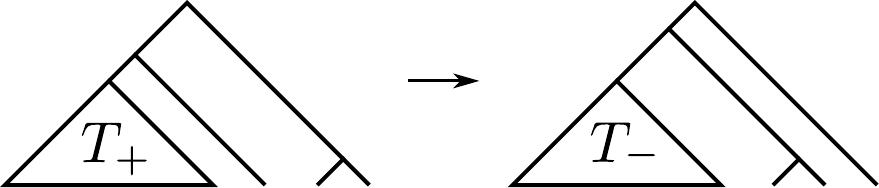_tex},\\
A_{01} (T_+, T_-) &\coloneqq \cinput{33}{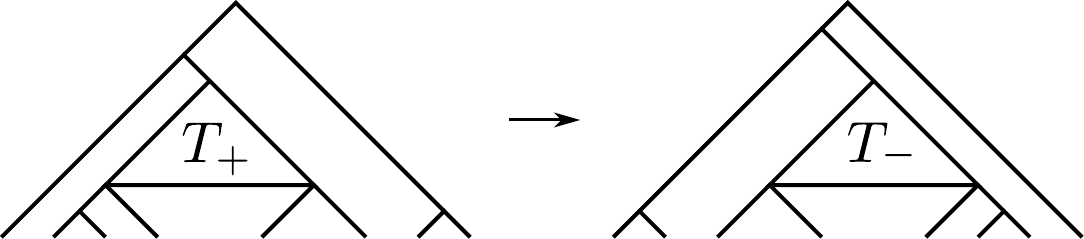_tex}.
\end{align*}
Moreover, by flipping left and right, the operations $A_{11}, A_{10} \colon \cT \to \cT$ are defined in the same way.
By choosing the reduced tree pair, these operations are defined over $F$.
\end{definition}

\begin{remark} \label{A00_attaching}
The operation $A_{00}$ can be considered as the $0$-th attaching operation with the given $g$, that is, we have
\begin{align*}
A_{00} (g) = \left(\ \cinput{15}{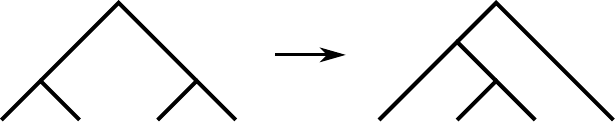_tex}\ \right) \circ_0 g.
\end{align*}
\end{remark}

\begin{lemma} \label{lemma_00operation}
For any element $g = (T_+, T_-) \in F$, all of the links $\cL(A_{00}(g))$, $\cL(A_{01}(g))$, $\cL(A_{10}(g))$ and $\cL(A_{11}(g))$ coincide with $\cL(g)$.
\begin{proof}
It is sufficient to show for the cases $A_{00}$ and $A_{01}$.
By remark \ref{A00_attaching}, $A_{00}$ can be considered as the $0$-th attaching operation, and the first element above gives the unknot.
Therefore, Lemma \ref{lemma_connected_sum} implies that the link $\cL(A_{00}(g))$ is exactly $\cL(g)$.
Also the case of $A_{01}$ is proved as in Figure \ref{A01link}.
\begin{figure}[tbp]
\begin{center}
\includegraphics{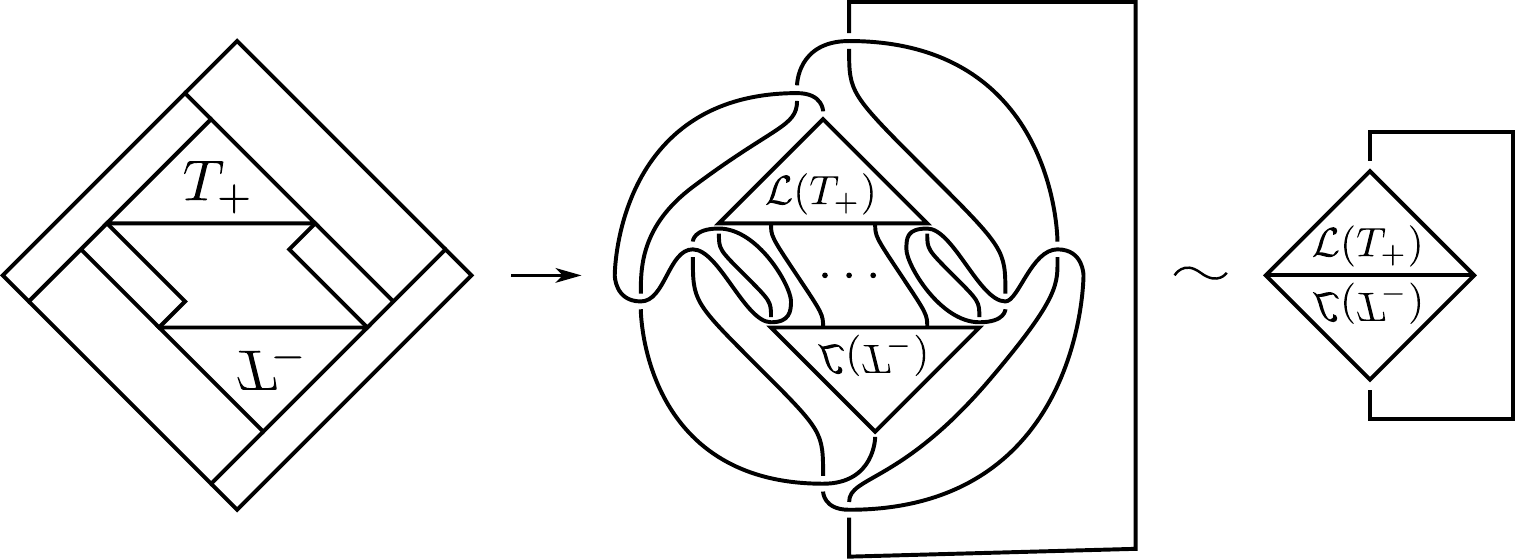}
\end{center}
\caption{The link $\cL (A_{01}(T_+, T_-))$, where $\cL(T_+)$ (resp.~$\cL(T_-)$) is the tangle obtained naturally from the binary tree $T_+$ (resp.~$T_-$) by Jones's construction.}
\label{A01link}
\end{figure}
\end{proof}
\end{lemma}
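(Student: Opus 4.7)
The plan is to reduce the four cases to just two by symmetry, then handle each by a different tool. Since $A_{10}$ and $A_{11}$ are obtained from $A_{00}$ and $A_{01}$ by the obvious left-right reflection of the tree pair, and Jones' construction commutes with this reflection (the reflected plane graph yields the mirror of the link diagram, but more importantly the identical construction applies to the reflected tree pair), it suffices to treat $A_{00}$ and $A_{01}$.

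For $A_{00}$, the plan is to invoke Remark \ref{A00_attaching} directly: that remark identifies $A_{00}(g)$ with $h \circ_{0} g$, where $h$ is the specific tree pair whose top and bottom each consist of a single caret attached via another caret on the left. A quick check of Jones' construction on this small tree pair $h$ shows that $\cL(h)$ is the unknot (one can draw $\cB(h)$ and observe it is a single closed curve, hence the diagram is a one-component diagram with crossings that resolve to the unknot). Then Lemma \ref{lemma_connected_sum} gives that $\cL(A_{00}(g)) = \cL(h \circ_{0} g)$ is the connected sum of the unknot with $\cL(g)$, which equals $\cL(g)$.

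For $A_{01}$, the attaching-operation trick does not apply directly because the added carets are inserted inside the tree rather than at a leaf, so the plan is to argue pictorially on the level of link diagrams. Specifically, I would draw $\cB(A_{01}(T_+, T_-))$ explicitly: the added structure produces four new vertices (carets) that, once turned into crossings, form a local tangle attached to the rest of the diagram coming from $(T_+, T_-)$. The key observation, which is exactly what Figure \ref{A01link} records, is that this local tangle is Reidemeister-equivalent to two parallel strands, so it can be absorbed into the rest of the diagram without changing the link type. Concretely, one identifies a pair of Reidemeister II moves (or a Reidemeister II followed by a Reidemeister I that cancels a curl introduced by the extra carets) that simplifies the local tangle to the trivial one on the same boundary, leaving the diagram $\cL(T_+, T_-)$ unchanged up to ambient isotopy.

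The main obstacle is the $A_{01}$ case, since it is not subsumed by the connected-sum machinery and forces an honest diagrammatic verification. Once the correct sequence of Reidemeister moves on the figure is identified, however, the argument is purely local and does not depend on $(T_+, T_-)$: the tangle from the four new crossings lives in a disk disjoint from the rest of the diagram, so the isotopy is valid for any $g$. This completes the four cases.
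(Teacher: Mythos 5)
Your proposal matches the paper's own proof: the paper likewise reduces to the $A_{00}$ and $A_{01}$ cases by left--right symmetry, handles $A_{00}$ via Remark \ref{A00_attaching} together with Lemma \ref{lemma_connected_sum}, and settles $A_{01}$ by the diagrammatic simplification recorded in Figure \ref{A01link}. Your added detail about which Reidemeister moves trivialize the local tangle is exactly what that figure is meant to convey, so the two arguments are essentially identical.
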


\begin{proof}[Proof of Theorem $\ref{Alexander_stabilizer}$]
Let $r$ in $(0, 1)$ with $r \neq 1/2$. 
We will construct $f \in \stab{r}$ such that $f$ is not the identity map and $\cL(f)$ is the unknot. 
By Lemma \ref{lemma_connected_sum}, the existence of the element $f$ implies that Alexander's theorem holds for $\stab{r}$. 
\begin{lemma} \label{lemma_stabilizer1}
If $r \in (0, 7/16] \cup [9/16, 1)$, then there exists an element $f \in \stab{r}$ such that $f$ is not the identity map and $\cL(f)$ is the unknot. 
\begin{proof}
If $r \in (0, 7/16]$, then it is clear from the definition of the basic elements defined in the previous section. 
If $r \in [9/16, 1)$, then by flipping left and right of the basic elements, we get the elements $g_{1001}, g_{1010},  g_{1011},  g_{1100},  g_{11010},  g_{11011}$ and $ g_{111}$. 
One of these elements is desired one. 
\end{proof}
\end{lemma}
We call $g_{1001}, g_{1010},  g_{1011},  g_{1100},  g_{11010},  g_{11011}$ and $ g_{111}$ also basic elements. 
\begin{lemma} \label{lemma_stabilizer2}
If $r \in [7/16, 9/16]$ with $r \neq 1/2$, then there exists an element $f \in \stab{r}$ such that $f$ is not the identity map and $\cL(f)$ is the unknot. 
\begin{proof}
Let $r \in [7/16, 1/2)$ and $0.0111\alpha$ be a binary representation of $r$. 
Since $r \neq 1/2$, $\alpha$ is not $111 \cdots$. 
Let $\alpha=\underbrace{11 \cdots 1}_{i}0\alpha ^\prime$. 
If $i=2k$ ($k \geq 0$), then we have
\begin{align*}
0.0111\alpha=0. 01\underbrace{1\cdots1}_{2k}110 \alpha^\prime. 
\end{align*}
Hence for a basic element $g$ which is either $g_{1100}, g_{11010}$, or $g_{11011}$, $A_{01}(A_{11}^k(g))$ is in $\stab{r}$. 
If $i=2k+1$ ($k \geq 0$), then we have
\begin{align*}
0.0111\alpha=0. 01\underbrace{1\cdots1}_{2k}1110 \alpha^\prime. 
\end{align*}
Therefore, $A_{01}(A_{11}^k(g_{111}))$ is in $\stab{r}$. 
Indeed, it is clear from the observation of the difference in the action of 
\begin{align*}
r=0. 01\underbrace{1\cdots1}_{2k}1110 \alpha^\prime
\end{align*}
and
\begin{align*}
0. 01\underbrace{1\cdots1}_{2k}1111 \alpha^\prime
\end{align*}
by $A_{01}(A_{11}^k(g_{111}))$. 
The proof is similar when $r$ is in $(1/2, 9/16]$. 
\end{proof}
\end{lemma}
By Lemmas \ref{lemma_stabilizer1} and \ref{lemma_stabilizer2}, for every $r$ except $1/2$,  we have that Alexander's theorem holds for $\stab{r}$. 
\end{proof}
\begin{example}
Let $r=1/3=0. 010101\cdots$. 
Then $g_{0101}$ is a nontrivial element in $\stab{1/3}$. 
Observe Figure \ref{0101fix_into_example} that Alexander's theorem for $\stab{1/3}$ holds. 
\end{example}
\begin{figure}[tbp]
\begin{center}
\includegraphics[width=0.9\linewidth]{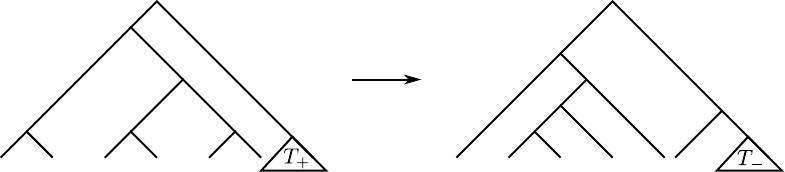}
\end{center}
\caption{An element in $\stab{1/3}$, where $(T_+, T_-)$ is an arbitrary tree pair. }
\label{0101fix_into_example}
\end{figure}
\begin{remark} \label{1/2_case}
If $r = 1/2$, then any tree pair of $\stab{1/2}$ is described as in Figure \ref{tree_pair_half}.
\begin{figure}[tbp]
\begin{center}
\includegraphics{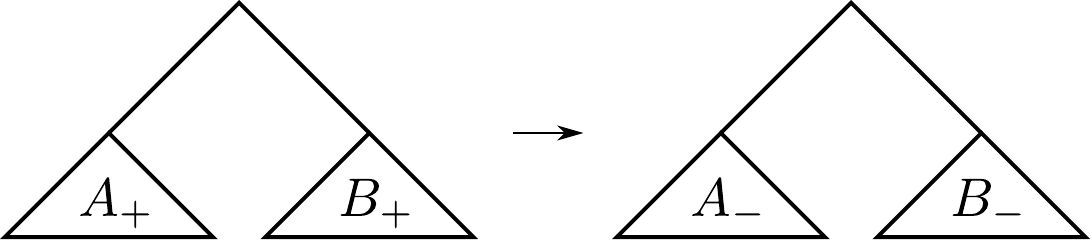}
\end{center}
\caption{An element of $\stab{1/2}$, where $(A_+, A_-)$ and $(B_+, B_-)$ are some tree pairs.}
\label{tree_pair_half}
\end{figure}
Therefore, this element produces a split link, and thus Alexander's theorem does not hold.
More precisely, we have
\begin{align*}
\Ima \cL |_{\stab{1/2}} = \left\{ L \sqcup \lower-1pt\hbox{$\bigcirc$} \mid L\ \textrm{is any knot or link} \right\}.
\end{align*}
In particular, since the action of $F$ on $\mathbb{Z}[1/2]$ is transitive, whether Alexander's theorem holds or not is not preserved under inner automorphisms.  
\end{remark}
In general, let $U$ be a subset of $(0,1)$.
Then we can consider the problem of Alexander's theorem for the stabilizer subgroup $\stab{U}$.
For example, from Remark \ref{1/2_case}, if $U$ contains $1/2$, then $\stab{U}$ does not satisfy Alexander's theorem.
Moreover, we see that $\stab{\{1/4, 1/8\}}$ does not satisfy the theorem either.
On the other hand, since $g_{0101}$ stabilizes $[5/16, 3/8]$, the subgroup $\stab{U}$ satisfies Alexander's theorem, where $U$ is any subset of this subinterval.
\begin{question}
Let $U$ be a subset of $(0,1)$.
What is the necessary and sufficient condition for $\stab{U}$ to satisfy Alexander's theorem?
\end{question}
\section{Other subgroups of Thompson's group $F$} \label{other_subgroup}
In this section, we give other examples which we know whether Alexander's theorem holds or not. 
\begin{example}[the commutator subgroup $F^\prime$] \label{example_commutator}
Note that we have
\begin{align*}
F^\prime = \{f \in F \mid \text{$f$ is the identity in the neighborhoods of $0$ and $1$} \}. 
\end{align*}
Hence the tree pair $X$ in Figure \ref{Alex_commutator} is in $F^\prime$ for any $(T_+, T_-)$. 
\begin{figure}[tbp]
\begin{center}
\includegraphics[width=0.7\linewidth]{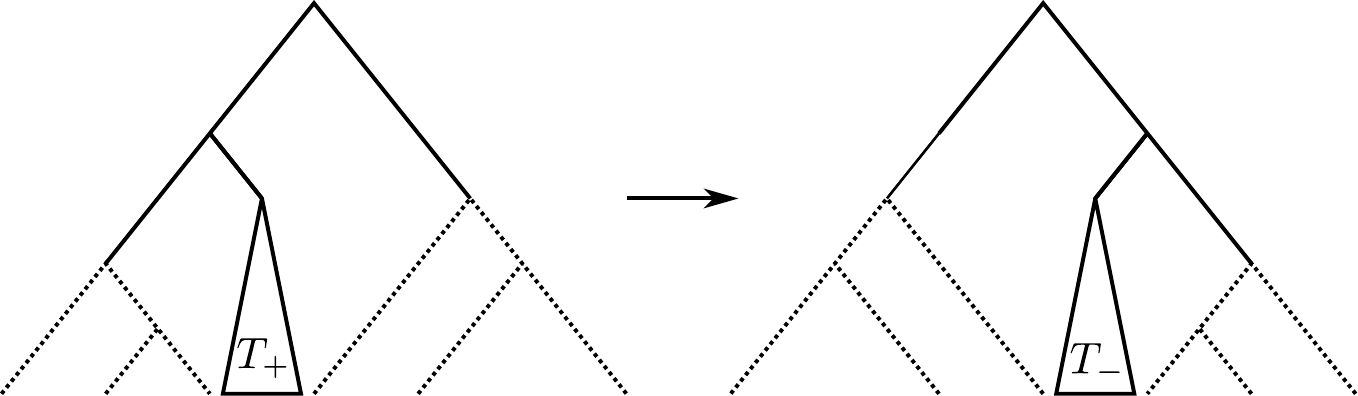}
\end{center}
\caption{An element $X$ in $F^\prime$ where $(T_+, T_-)$ is reduced tree pair in $F$. The dotted binary trees correspond to the binary tree of $x_0$. }
\label{Alex_commutator}
\end{figure}
See \cite[Theorem 4.1]{cannon1996intro} for details. 
Since $\cL(x_0)$ is the unknot, by Lemma \ref{lemma_connected_sum}, the two tree pairs $X$ and $(T_+, T_-)$ give the same link. 
Therefore Alexander's theorem holds for $F^\prime$. 

In particular, if a subgroup $G$ of $F$ contains $F^\prime$ as a subgroup, then Alexander's theorem holds for $G$. 
For instance, Alexander's theorem holds for any rectangular subgroup. 
Let $a, b$ in $\mathbb{N}$. 
We define the rectangular subgroup $K_{(a, b)}$ as
\begin{align*}
K_{(a, b)} \coloneqq \{f \in F \mid \log_2 f^\prime(0) \in a\mathbb{Z}, \log_2 f^\prime(1) \in b\mathbb{Z} \}. 
\end{align*}
This group was introduced in \cite{bleak2007finite} to study the classification of the finite index subgroups of $F$. 
\end{example}
\begin{example}[the subgroup isomorphic to the restricted wreath product $F \wr \mathbb{Z}$] \label{example_FwrZ}
Wu and Chen \cite[Theorem 3.1]{wu2014distortion} showed that the subgroup $H_1$ generated by $x_0, x_1^2x_2^{-1}x_1^{-1}$ and $x_1x_2^2 x_3^{-1}x_2^{-1}x_1^{-1}$ is isomorphic to $F \wr \mathbb{Z}$ and it is undistorted. 

We note that the subgroup (denoted $\Phi_0$ in \cite{wu2014distortion} and introduced in \cite[Lemma 19]{guba1999subgroups}) generated by $x_1^2x_2^{-1}x_1^{-1}$ and $x_1x_2^2 x_3^{-1}x_2^{-1}x_1^{-1}$ is 
\begin{align*}
\{f \in F \mid \text{$f$ is identity on $[0, 1/2]$ and $[3/4, 1]$}\}. 
\end{align*}
Therefore the tree pair $(A_+, A_-)$ illustrated in Figure \ref{Alex_FwrZ} is in this group for any $(T_+, T_-)$, and this group is isomorphic to $F$ (this was stated in \cite[Lemma 19]{guba1999subgroups}). 
\begin{figure}[tbp]
\begin{center}
\includegraphics[width=0.5\linewidth]{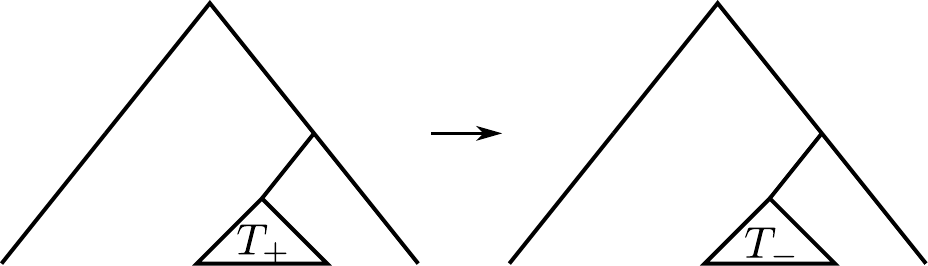}
\end{center}
\caption{An  tree pair $(A_+, A_-)$ where $(T_+, T_-)$ is reduced tree pair in $F$. }
\label{Alex_FwrZ}
\end{figure}
By considering $x_0 \times (A_+, A_-)=x_0 \circ_1 (T_+, T_-)$, it follows by Lemma \ref{lemma_connected_sum} that Alexander's theorem holds for $H_1$. 
\end{example}
\begin{example}[the oriented subgroup $\oF$] \label{example_oriented}
In \cite{aiello2020alexander}, Aiello showed that any (oriented) link can be obtained from $\oF$. 
Golan and Sapir \cite[Theorem 1]{golan2017jones} showed that $\oF$ is generated by $x_0x_1, x_1x_2$, and $x_2x_3$, and is isomorphic to the Brown--Thompson group $F(3)$. 
\end{example}
The following subgroups are examples where Alexander's theorem does not hold. 
\begin{example}[the group generated by $x_0$] \label{ex_x0}
Since $\cL(x_0^k)$ and $\cL(x_0^{-k})$ are the same link, assume that $k \geq 0$. 
Then $\cL(x_0)$ and $\cL(x_0^2)$ are the unknots, and $\cL(x_0^3)$ is the 2-component unlink.
In general, $\cL(x_0^k)$ is the same as $\cL(x_0^{k-3})$ for any $k \geq 4$; see Figure \ref{x0_power}.
\begin{figure}[tbp]
\begin{center}
\includegraphics{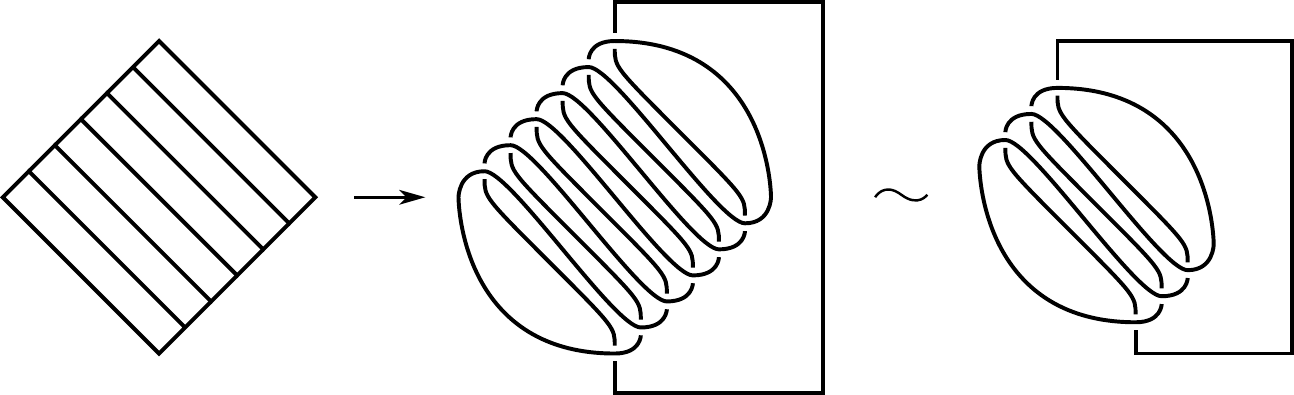}
\end{center}
\caption{$\cL(x_0^k)$ is the same as $\cL(x_0^{k-3})\ (k = 5)$.}
\label{x0_power}
\end{figure}
\end{example}
\begin{example}[the subgroup isomorphic to the restricted wreath product $\mathbb{Z} \wr \mathbb{Z}$] \label{example_ZwrZ}
Let $H_2$ be the group generated by $x_0$ and $x_1^2x_2^{-1}x_1^{-1}$. 
This group is isomorphic to $\mathbb{Z} \wr \mathbb{Z}$ and undistorted \cite[Theorem 3.1]{cleary2006distortion}. 
Let $a \coloneqq x_1^2x_2^{-1}x_1^{-1}$ and $t \coloneqq x_0$. 
We define $a_n$ to be $t^{-n} a t^n$ where $n$ is in $\mathbb{Z}$. 
Similar to \cite{cleary2006distortion}, for a word $w$ in $H_2$, we consider its (right-first) normal form
\begin{align*}
a_{i_1}^{e_1} a_{i_2}^{e_2} \cdots a_{i_k}^{e_k} a_{-j_1}^{f_1} a_{-j_2}^{f_2} \cdots a_{-j_l}^{f_l} t^m, 
\end{align*}
where $i_k > \cdots i_2 > i_1 \geq 0$ and $j_l> \cdots j_2>j_1 >0$ and $e_i, f_j \neq 0$. 
If $m=0$, then the tree pair of $w$ is as in Figure \ref{ZwrZ_1}. 
\begin{figure}[tbp]
\begin{center}
\includegraphics{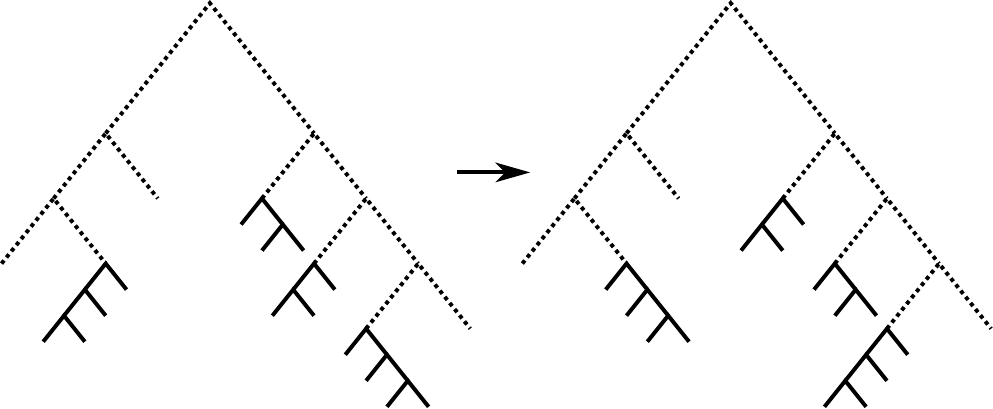}
\end{center}
\caption{A tree pair of $a_0^{-1}a_1^{1}a_2^{-2}a_{-2}^{2}$. The dotted tree pair is a representative of the identity element of $F$. }
\label{ZwrZ_1}
\end{figure}
Since the tree pair of $w$ is given by attaching the tree pairs of $x_0^{e_1}, \dots, x_0^{e_k}, x_0^{f_1}, \dots, x_0^{f_l}$ to each certain leaves of the (trivial) tree pair, by Lemma \ref{lemma_connected_sum} and Example \ref{ex_x0}, $\cL(w)$ is an unlink. 
Observe that the number of components can be increased as desired. 
If $m \neq 0$, then we also obtain an unlink since the tree pair before attaching $x_0^i$s is equivalent to the tree pair of $x_0^m$. 
See Figure \ref{ZwrZ_2} for example. 
\begin{figure}[tbp]
\begin{center}
\includegraphics{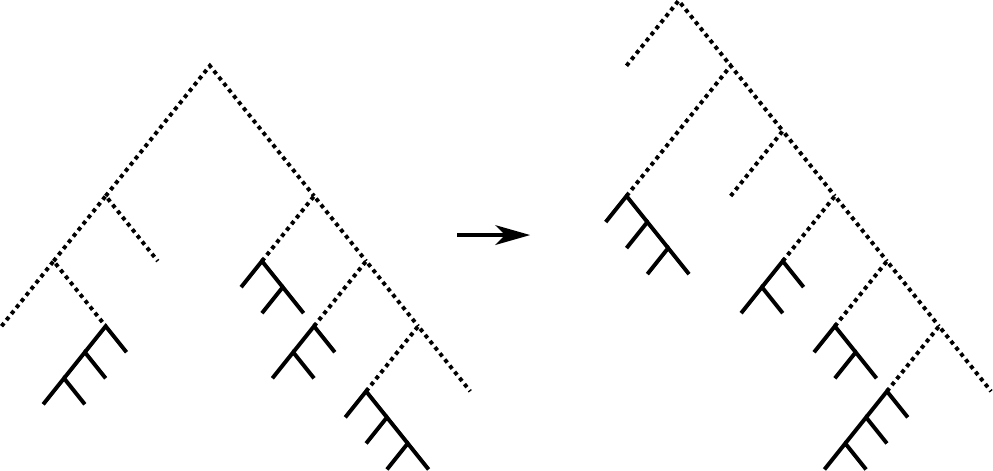}
\end{center}
\caption{A tree pair of $a_0^{-1}a_1^{1}a_2^{-2}a_{-2}^{2}t^2$. The dotted tree pair is a non-reduced tree pair of $x_0^2$. }
\label{ZwrZ_2}
\end{figure}
\end{example}
\begin{example}[the $3$-colorable subgroup $\cF$ and its generalization $\cF_p$] \label{example_pcol}
In \cite{jones2018nogo}, Jones introduced the subgroup $\cF$ called $3$-colorable subgroup. 
The authors \cite[Theorem 1.2]{kodama20223} showed that every nontrivial element of $\cF$ gives a $3$-colorable knot or link. 
Furthermore, for any odd integer $p \geq 3$, the authors \cite{kodama2023p} introduced the subgroup $\cF_p$, which is a generalization of $\cF$ and satisfies $\cF_3=\cF$. 
Similar to $\cF$, every nontrivial element of $\cF_p$ gives a $p$-colorable knot or link \cite[Theorem 3.8]{kodama2023p}. 
Therefore Alexander's theorem does not hold for $\cF$ and $\cF_p$. 
Note that for any Mersenne number $p \geq3$, it is not known (even if $p=3$) whether $\cL|_{\cF_p}\colon\cF_p \to \{ \text{all $p$-colorable knots and links}\}$ is surjective or not. 
See \cite[Question 4.7]{kodama2023p} for details. 
\end{example}
\begin{question}
Let $G$ be one of the subgroups of $F$ described in Examples \ref{example_commutator}, \ref{example_FwrZ}, \ref{example_oriented}, \ref{ex_x0}, and  \ref{example_ZwrZ}. 
Let $\phi\colon G \to F$ be the embedding given by the inclusion. 
Then we already know the image $\cL(\phi(G))$. 
Let $\hat{\phi}\colon G \to F$ be another embedding. 
As mentioned in Remark \ref{1/2_case}, it generally depends on the embedding $\hat{\phi}$ whether Alexander's theorem holds or not. 
What can we say about the image $\cL(\hat{\phi}(G))$?
\end{question}
\begin{question}
Let $G$ be a subgroup of $F$. 
What is the necessary and sufficient condition for $G$ to satisfy Alexander's theorem?
\end{question}

\section*{Acknowledgements}
We would like to thank Professor Tomohiro Fukaya who is the first author's supervisor for his helpful comments. 
We also wish to thank Professor Takuya Sakasai who is the second author's supervisor for his helpful comments.

\bibliographystyle{plain}
\bibliography{bib1} 
\bigskip
\address{
DEPARTMENT OF MATHEMATICAL SCIENCES,
TOKYO METROPOLITAN UNIVERSITY,
MINAMI-OSAWA HACHIOJI, TOKYO, 192-0397, JAPAN
}

\textit{E-mail address}: \href{mailto:kodama-yuya@ed.tmu.ac.jp}{\texttt{kodama-yuya@ed.tmu.ac.jp}}

\address{GRADUATE SCHOOL OF MATHEMATICAL SCIENCES, THE
UNIVERSITY OF TOKYO, 3-8-1 KOMABA, MEGURO-KU, TOKYO, 153-8914,
JAPAN}

\textit{E-mail address}: \href{mailto:takano@ms.u-tokyo.ac.jp}{\texttt{takano@ms.u-tokyo.ac.jp}}
\end{document}